\newtheorem{theorem}{Theorem}
\newtheorem{prop}[theorem]{Proposition}
\theoremstyle{definition}
\newtheorem{definition}[theorem]{Definition}
\newtheorem{example}[theorem]{Example}
\theoremstyle{remark}
\numberwithin{theorem}{section}
\numberwithin{equation}{section}
\newcommand{\Z}{\mathbb{Z}}
\newcommand{\mb}{\mathbb}
\renewcommand{\phi}{\varphi}
\newcommand{\paren}[1]{\left(#1\right)}
\newcommand{\set}[1]{\left\{#1\right\}}
\newcommand{\abs}[1]{\left\lvert#1\right\rvert}
\DeclareMathOperator{\rank}{rank}
\begin{document}

\title{Fuglede's conjecture fails in 4 dimensions over odd prime fields}

\author{Samuel J.~Ferguson}
\address{Courant Institute of Mathematical Sciences\\ New York University\\ 251 Mercer St.\\ New York, NY 10012, USA}
\email{ferguson@cims.nyu.edu}

\author{Nat Sothanaphan}
\address{Courant Institute of Mathematical Sciences\\ New York University\\ 251 Mercer St.\\ New York, NY 10012, USA}
\email{natsothanaphan@gmail.com}

\begin{abstract}
Fuglede's conjecture in $\mathbb{Z}_{p}^{d}$, $p$ a prime, says that a subset $E$ tiles $\mathbb{Z}_{p}^{d}$ by translation if and only if $E$ is spectral, meaning any complex-valued function $f$ on $E$ can be written as a linear combination of characters orthogonal with respect to $E$.
We disprove Fuglede's conjecture in $\Z_p^4$ for all odd primes $p$, by using log-Hadamard matrices to exhibit spectral sets of size $2p$ which do not tile, extending the result of Aten et al.~\cite{REU} that the conjecture fails in $\Z_p^4$ for primes $p \equiv 3 \pmod 4$ and in $\Z_p^5$ for all odd primes $p$.
We show, however, that our method does not extend to $\mathbb{Z}_{p}^{3}$.
We also prove the conjecture in $\Z_2^4$, resolving all cases of four-dimensional vector spaces over prime fields.
Our simple proof method does not extend to higher dimensions. The authors, however, have written a computer program to verify that the conjecture holds in $\Z_2^5$ and $\Z_2^6$.
Finally, we modify Terry Tao's \cite{Tao} counterexample to show that the conjecture fails in $\Z_2^{10}$. Fuglede's conjecture in $\mathbb{Z}_{p}^{d}$ is now resolved in all cases except when $d=3$ and $p\geq 11$, or when $p=2$ and $d=7,8,9$.
\end{abstract}

\maketitle

{\small \textbf{Keywords:} 43A70; Fuglede's conjecture; tiling; spectral set; prime field; log-Hadamard matrix}

\section{Introduction}

In this paper, we attempt to resolve Fuglede's conjecture for all finite-dimensional vector spaces over prime fields of dimensions $d\neq 3$, and we succeed except for vector spaces over the field $\mathbb{Z}_{2}$ of dimensions $d=7,8,9$.

We briefly review Fuglede's conjecture \cite[p. 119]{Fug} in its original context of $\mathbb{R}^{d}$. Throughout this paragraph, let $E$ denote a measurable subset of $\mathbb{R}^{d}$ of finite, positive Lebesgue measure. We call $E$ a \emph{spectral set} if the Hilbert space $L^{2}(E)$ has an orthogonal basis of complex exponentials $\{x\mapsto e^{2\pi i (\lambda\cdot x)}\}_{\lambda\in\Lambda}$, for some exponent set $\Lambda\subset\mathbb{R}^{d}$. On the other hand, we say that $E$ \emph{tiles} $\mathbb{R}^{d}$ by translation if there exists a translation set $T\subset\mathbb{R}^{d}$ such that $\bigcup_{t\in T}(E+t)$ differs from $\mathbb{R}^{d}$ by a set of measure zero and, for all distinct $t, t^{\prime}\in T$, $(E+t)\cap (E+t^{\prime})$ is of measure zero. \emph{Fuglede's conjecture} states that $E$ is a spectral set if and only if $E$ tiles $\mathbb{R}^{d}$ by translation.

Fuglede originally stated his conjecture as an attempt to provide a more explicit description of his solution to a problem Segal posed in 1958 which, according to Jorgensen \cite{Jor}, arose from the work of von Neumann on the foundations of quantum mechanics. Essentially, Segal asked for a characterization of the domains $E\subset\mathbb{R}^{d}$ of finite Lebesgue measure such that there exist, on $L^{2}(E)$, commuting self-adjoint restrictions of the operators $\frac{1}{2\pi i}\frac{\partial}{\partial x_{1}},\dots, \frac{1}{2\pi i}\frac{\partial}{\partial x_{d}}$. Under a technical condition on $E$ that was later removed by Pedersen \cite{Ped}, Fuglede proved that a domain $E$ of finite measure has the above property if and only if $E$ is a spectral set, in which case each exponent set $\Lambda$ gives rise to unique commuting restrictions which have $\Lambda$ as their joint spectrum. Thus an exponent set is also called a \emph{spectrum} for $E$, and hence the term ``spectral set." Fuglede then formed the conjecture that spectral sets are precisely the sets that tile $\mathbb{R}^{d}$ by translation, a more explicit, geometric condition.

We are interested in Fuglede's conjecture in $\mathbb{Z}_{p}^{d}$, $p$ a prime. Already in his paper \cite[p. 120]{Fug}, Fuglede considered spectral and tiling sets in cyclic groups as a way of testing the likelihood of his conjecture, and pointed out that the notions of spectral and tiling sets have extensions to locally compact abelian groups. Restricting our attention to $\mathbb{Z}_{p}^{d}$, we say that a nonempty subset $E$ of $\mathbb{Z}_{p}^{d}$ \emph{tiles} if there exists a subset $T$ of $\mathbb{Z}_{p}^{d}$ such that $\bigcup_{t\in T}(E+t)=\mathbb{Z}_{p}^{d}$ and, for each distinct $t, t^{\prime}\in T$, $(E+t)\cap (E+t^{\prime})=\varnothing$. For finite abelian groups $G$, the role played by complex exponentials in $\mathbb{R}^{d}$ is performed by group homomorphisms $\phi:G\to\mathbb{C}^{\times}$. In particular, it is well known that for $G=\mathbb{Z}_{p}^{d}$, the set of all such homomorphisms is given by $\{x\mapsto e^{2\pi i (\lambda\cdot x)/p}:\lambda\in\mathbb{Z}_{p}^{d}\}$, where $\lambda\cdot x = \lambda_{1}x_{1}+\cdots +\lambda_{d}x_{d}$ for $\lambda=(\lambda_{1}, \dots, \lambda_{d})$ and $x=(x_{1},\dots, x_{d})$ in $\mathbb{Z}_{p}^{d}$. We define a nonempty subset $E$ of $\mathbb{Z}_{p}^{d}$ to be \emph{spectral} if there exists a subset $\Lambda$ of $\mathbb{Z}_{p}^{d}$ such that $\{x\mapsto e^{2\pi i (\lambda\cdot x)/p}\}_{\lambda\in \Lambda}$ forms an orthogonal basis for $L^{2}(E)$. The latter is defined to be the $|E|$-dimensional vector space of all functions $f:E\to\mathbb{C}$, equipped with the inner product $\langle f, g\rangle = \sum_{x\in E}f(x)\overline{g(x)}$ for $f,g\in L^{2}(E)$. \emph{Fuglede's conjecture} states that a nonempty subset $E$ tiles $\mathbb{Z}_{p}^{d}$ if and only if it is a spectral set.

In 2004, Tao \cite{Tao} disproved Fuglede's conjecture in $\mathbb{Z}_{p}^{d}$ for $p=2$ when $d \geq 11$ and for $p=3$ when $d \geq 5$. He then lifted his counterexamples to disprove Fuglede's conjecture in $\mathbb{R}^{d}$ for $d\geq 5$, thereby raising interest in the conjecture for $\mathbb{Z}_{p}^{d}$. Could counterexamples in $\mathbb{Z}_{p}^{d}$ be given for lower dimensions, thereby disproving Fuglede's conjecture in $\mathbb{R}^{d}$ for $d \leq 4$? Iosevich, Mayeli, and Pakianathan~\cite{IMP} partially answered this question in the negative by proving the conjecture true in $\mathbb{Z}_{p}^{d}$ for all primes $p$ when $d \leq 2$. An alternate proof was given recently by Kiss, Malikiosis, Somlai, and Vizer \cite{KMSV}, and their proof utilizes the classical result of R\'{e}dei \cite{Red}, popularized by Sz\H{o}nyi \cite{Szo}, that if a nonempty set tiles, then either the set or its translation set must be a coset of a subgroup when $d=2$. The conjecture in $\mathbb{R}^{d}$ remains unresolved for $d\leq 2$, but it has been disproved for $d=3,4$ by Farkas, Kolountzakis, Matolcsi, M\'{o}ra, and R\'{e}v\'{e}sz \cite{FarkMM, FarkR, KM1, KM2, Mat}. The counterexamples for $d=3$ use groups of the form $\mathbb{Z}_{m}^{3}$ with $m$ composite.

More recently, Aten et al.~\cite{REU} disproved Fuglede's conjecture for all odd primes $p$ when $d\geq 5$,
and for all primes $p \equiv 3 \pmod 4$ when $d=4$.
They also proved it true for $p \leq 3$ when $d=3$.
Recently, Fallon, Mayeli and Villano~\cite{FMV} extended this result by proving the conjecture for $p = 5,7$ when $d=3$.
Fallon has considered taking the topic of Fuglede's conjecture for $\mathbb{Z}_{p}^{3}$ for his doctoral dissertation, and for this reason we do not consider $\mathbb{Z}_{p}^{3}$ here.

The conjecture for $p \equiv 1 \pmod 4$ when $d=4$, $p\geq 11$ when $d=3$, and $p=2$ for $4\leq d\leq 10$, were not resolved by \cite{REU, FMV, FarkMM, FarkR, IMP, KM1, KM2, Mat, Tao}. In this paper, we modify the counterexample that Aten et al.~\cite{REU} used to disprove the conjecture
in $\Z_p^4$ for primes $p \equiv 3 \pmod 4$. We thereby obtain counterexamples for all primes $p \equiv 1 \pmod 4$.
This settles the status of the conjecture in $\Z_p^4$ for odd primes $p$, as stated below in our main result.

\begin{theorem}
\label{thm:main}
Fuglede's conjecture fails in $\Z_p^4$ for all odd primes $p$.
\end{theorem}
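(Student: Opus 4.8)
The plan is to exhibit, for every odd prime $p$, a subset $E\subseteq\Z_p^4$ of size $2p$ that is spectral but does not tile. The non-tiling half is automatic: if $E$ tiled with translation set $T$, then $\abs{E}\cdot\abs{T}=p^4$, so $\abs{E}$ would have to be a power of $p$; but $2p$ is not a power of $p$ for odd $p$. Hence the entire task reduces to producing a spectral set of size $2p$.

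To handle spectrality I would first reformulate it. Writing $\omega=e^{2\pi i/p}$ and $\widehat{1_E}(\mu)=\sum_{x\in E}\omega^{\mu\cdot x}$, two characters $\chi_\lambda,\chi_{\lambda'}$ are orthogonal over $E$ exactly when $\widehat{1_E}(\lambda-\lambda')=0$. Thus $E$ is spectral as soon as there is a set $\Lambda$ with $\abs{\Lambda}=\abs{E}=2p$ and $(\Lambda-\Lambda)\setminus\{0\}$ contained in the zero set $Z(E)=\{\mu:\widehat{1_E}(\mu)=0\}$: the $2p$ pairwise-orthogonal characters are then independent and span the $2p$-dimensional space $L^2(E)$. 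Equivalently, the character matrix $[\omega^{\lambda\cdot e}]_{\lambda\in\Lambda,\,e\in E}$ must be a complex Hadamard matrix whose entries are $p$-th roots of unity; its matrix of exponents $L=[\lambda\cdot e\bmod p]$ is then a log-Hadamard matrix, and the requirement that $E$ and $\Lambda$ live in $\Z_p^4$ is precisely the condition $\rank_{\Z_p}L\le 4$, since $L=\mathbf{\Lambda}\,\mathbf{E}^{\mathsf T}$ for the $2p\times 4$ coordinate matrices $\mathbf{\Lambda},\mathbf{E}$. So it suffices to construct a $2p\times 2p$ log-Hadamard matrix over $\Z_p$ of rank at most $4$ whose induced $E$ and $\Lambda$ each have $2p$ distinct elements.

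For the construction I would build $E$ (and $\Lambda$) as the union of two size-$p$ pieces, each a graph of a quadratic function of a parameter $t\in\Z_p$ together with a marker coordinate distinguishing the pieces; the coordinates $1,t,t^2$ and the marker occupy the four available dimensions and keep $\rank_{\Z_p}L\le 4$. With this layout $\widehat{1_E}(\mu)$ splits as a sum of two quadratic exponential sums, each evaluable as a quadratic Gauss sum. The idea is to choose the scalings of the two quadratic pieces, using a quadratic nonresidue, so that the two Gauss sums have opposite signs and cancel on a large, structured set of frequencies $\mu$, large enough to contain $(\Lambda-\Lambda)\setminus\{0\}$ for a $2p$-element $\Lambda$ of the same shape. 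This recovers, for $p\equiv 3\pmod 4$, the counterexample of Aten et al.~\cite{REU}.

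The main obstacle is the case $p\equiv 1\pmod 4$, which is the new content. The quadratic Gauss sum $\sum_t\omega^{t^2}$ equals $\sqrt p$ when $p\equiv 1$ but $i\sqrt p$ when $p\equiv 3\pmod 4$, and $-1$ is a nonresidue only when $p\equiv 3$; consequently the sign-flip that produced cancellation for $p\equiv 3$ no longer occurs, and the naive doubled construction fails to be spectral. The fix is to modify the two quadratic pieces, rescaling by a genuine nonresidue and inserting a compensating linear twist, so that the relevant character sums again vanish while the marker coordinate still forces the two pieces to be disjoint. The bulk of the work, and the delicate part, is the verification: computing the modified exponential sums through their Gauss sums to confirm orthogonality for $p\equiv 1\pmod 4$, checking that the rank stays at most $4$ via the explicit factorization $L=\mathbf{\Lambda}\,\mathbf{E}^{\mathsf T}$, and confirming that $\abs{E}=\abs{\Lambda}=2p$ so that the orthogonal characters really form a basis.
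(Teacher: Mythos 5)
Your reduction is sound and matches the paper's framework: since $2p$ does not divide $p^4$ for odd $p$, non-tiling is automatic, and spectrality of a $2p$-element set in $\Z_p^4$ is equivalent to exhibiting a $2p\times 2p$ log-Hadamard matrix over $\Z_p$ of rank at most $4$ (this is Theorem \ref{thm:lhtofuglede}, quoted from Aten et al.). But the heart of the theorem --- the actual construction for $p\equiv 1\pmod 4$ --- is missing from your proposal. You describe the fix only as ``rescaling by a genuine nonresidue and inserting a compensating linear twist'' and then defer ``the bulk of the work'' to an unperformed Gauss-sum verification. That is a plan, not a proof: without the explicit modified rows and the computation showing the exponent matrix is log-Hadamard of rank $\le 4$, the new case is not established.

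Moreover, your diagnosis of what goes wrong for $p\equiv 1\pmod 4$ is off, and this matters because it points you toward unnecessary work. The Aten et al. matrix $\mb{L}$ built from an arbitrary nonresidue $n$ is already log-Hadamard for \emph{every} odd prime $p$ --- the cancellation does not fail when $p\equiv 1\pmod 4$. What fails is only the rank bound: the five spanning vectors $(v_2,nv_2)$, $(v_1,nv_1)$, $(v_1,v_1)$, $(v_0,nv_0)$, $(nv_0,v_0)$ give rank $5$, and only when $n=-1$ is admissible (i.e.\ $p\equiv 3\pmod 4$) do the last two become parallel. The paper's resolution requires no new exponential-sum estimates at all: adding multiples $\alpha k^2(v_0,v_0)$ and $\beta k^2(v_0,v_0)$ of the all-one vector to the rows preserves the log-Hadamard property for free, and one merely solves the single linear condition $\beta-n\alpha+(n^2+n)=0$ to make the two constant-vector blocks parallel, collapsing the rank to $4$ for any nonresidue $n$ and any odd $p$. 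Your ``linear twist'' might conceivably be massaged into this, but as written you would be re-deriving orthogonality from scratch for an unspecified construction, whereas the actual argument is a one-line dephasing of an already-verified matrix. You need to either supply your construction explicitly and carry out the verification, or adopt the rank-reduction-by-all-one-vectors device.
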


Nevertheless, our counterexample does not work when $p=2$.
In that case,
we prove the conjecture true for $d=4$ and give a modification of Tao's \cite{Tao} counterexample
to disprove it in the case $d=10$. We also show that the simple method of proof used for $\Z_2^4$ fails to give a proof for $\Z_2^5$. However, the authors have written a computer program to verify that the conjecture indeed holds in $\Z_2^5$ and $\Z_2^6$.
The link to the computer program is:

\begin{center}
\url{https://github.com/natso26/FugledeZ_2-d}.
\end{center}

\noindent
The program consists of two parts. A complete explanation and justification that running both parts is sufficient to verify Fuglede's conjecture for $\Z_2^5$ and $\Z_2^6$ is provided in an addendum \cite{Add}.

We remark that checking whether a set tiles or is spectral using a computer is difficult because there may be no efficient algorithm for it.
The authors of \cite{KM1} partially analyzed the computational complexity of the task and showed that a related problem is NP-complete. It is computationally infeasible for us to run the program in the case where $p=2$ and $7 \leq d \leq 9$, and so the conjecture in this case remains open.

We state our conclusions as follows.

\begin{theorem}
\label{thm:p2}
Fuglede's conjecture holds in $\Z_2^d$ when $d \leq 6$ and fails in $\Z_2^d$ when $d \geq 10$.
\end{theorem}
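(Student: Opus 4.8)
The statement splits into two independent halves, and I would treat them by different mechanisms: the negative direction $d \ge 10$ propagates upward, whereas the positive direction $d \le 6$ does not transfer between dimensions and must be settled case by case. For the failure in high dimensions, my first step is a lifting lemma: if $E \subseteq \Z_2^d$ is spectral but does not tile, then $\tilde E = E \times \Z_2 \subseteq \Z_2^{d+1}$ is again spectral but does not tile. Spectrality lifts because a spectrum $\Lambda$ for $E$ yields the spectrum $\Lambda \times \Z_2$ for $\tilde E$, the extra coordinate contributing a factor $\sum_{y \in \Z_2}(-1)^{(a-b)y} = 2\delta_{a,b}$ (with $a,b$ the last coordinates of two spectrum elements), so orthogonality is preserved and $|\Lambda \times \Z_2| = 2|E| = |\tilde E|$. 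Non-tiling lifts because each translate $\tilde E + (t',a)$ equals $(E+t')\times\Z_2$ independently of $a$; a tiling of $\Z_2^{d+1}$ by $\tilde E$ would thus force the (necessarily distinct) first coordinates of the translation set to tile $\Z_2^d$ by $E$, a contradiction. By induction this reduces the entire claim for $d \ge 10$ to producing one counterexample in $\Z_2^{10}$, which then subsumes Tao's \cite{Tao} examples for $d \ge 11$.

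To build the counterexample in $\Z_2^{10}$ I would exploit the divisibility obstruction to tiling: any tile of $\Z_2^{10}$ has size dividing $2^{10} = 1024$, so a spectral set of size $12$ automatically fails to tile since $12 \nmid 1024$. It therefore suffices to realize a spectral set of size $12$ inside $\Z_2^{10}$. Writing the set $E = \{e_1,\dots,e_{12}\}$ and a candidate spectrum $\Lambda = \{\lambda_1,\dots,\lambda_{12}\}$ as rows of $\Z_2$-matrices $F$ and $L$, spectrality is exactly the condition that the $12 \times 12$ sign matrix $\big[(-1)^{\lambda_i \cdot e_j}\big]$ be a real Hadamard matrix, i.e. that its $\Z_2$-logarithm $A = [\lambda_i \cdot e_j]$ factor as $A = L\,F^{\mathsf T}$ over $\Z_2$ with $L,F$ of size $12 \times 10$; equivalently, one needs an order-$12$ Hadamard matrix whose $\Z_2$-logarithm has rank at most $10$ (distinctness of the $e_j$ and $\lambda_i$ then being forced by orthogonality of the Hadamard rows and columns). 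This rank bound is the crux, and the step I expect to be hardest: Tao's construction lands in $\Z_2^{11}$, so the modification must normalize and select the essentially unique order-$12$ Hadamard matrix so as to save one dimension, bringing the $\Z_2$-rank of its logarithm down to $10$. Once such a factorization is exhibited, $E$ is spectral of size $12$, cannot tile, and the lifting lemma closes out all $d \ge 10$.

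For the positive half $d \le 6$, no uniform argument is available, so I would dispatch the cases in turn. The cases $d \le 2$ follow from Iosevich--Mayeli--Pakianathan \cite{IMP}, and $d = 3$ from Aten et al.~\cite{REU}, who settle $p \le 3$. For $d = 4$ the key structural observation is that, since an order-$12$ log-Hadamard matrix needs rank well beyond $4$, every spectral subset of $\Z_2^4$ has size a power of $2$ (the only admissible Hadamard orders up to $16$ are $1,2,4,8,12,16$, and $12$ is excluded by the rank bound); classifying these sets alongside the tiles of $\Z_2^4$ then yields both implications by a short case analysis on $|E| \in \{1,2,4,8,16\}$. This counting method breaks down by $d = 5$, as noted above, so for $d = 5,6$ I would resort to an exhaustive search: reduce the candidate sets modulo the action of $\mathrm{GL}_d(\Z_2)$ together with translation to a feasible list of representatives, and decide tiling and spectrality of each by computer, with the reduction and its completeness justified as in the addendum \cite{Add}. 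The practical obstacle here is computational, since deciding these properties is expected to be hard \cite{KM1}; the same infeasibility is precisely what leaves $d = 7,8,9$ untouched.
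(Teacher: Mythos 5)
Your overall strategy matches the paper's: dephase (``normalize'') Tao's order-$12$ Hadamard matrix to get a counterexample in $\Z_2^{10}$, use the power-of-two size constraint plus a case analysis for $\Z_2^4$, and fall back on exhaustive computation for $d=5,6$. (Your lifting lemma $E\mapsto E\times\Z_2$ is a valid alternative to the paper's route for $d>10$; the paper instead just reuses the same $12\times12$ matrix, since a log-Hadamard matrix of rank $10$ trivially has rank at most $d$ for every $d\ge 10$.) But there is a genuine gap at the crux of the negative direction: you never actually produce the rank-$\le 10$ factorization, writing only ``once such a factorization is exhibited.'' Dephasing guarantees that the resulting matrix has the \emph{lowest} rank in its equivalence class (the paper's Proposition \ref{prop:alog}), but it does not guarantee that this lowest rank is $10$ rather than $11$; that must be checked, and the paper does so by writing down the explicit dephased matrix \eqref{eq:dozen} and computing its $\Z_2$-rank. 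Without that verification the entire $d\ge 10$ half, and with it the exclusion of size-$12$ spectral sets that you invoke for $d=4$, is unsupported.

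A second, smaller gap is in the case $|E|=4$ of $\Z_2^4$. The sizes $1,2,8,16$ are covered by the structure results of Aten et al.~\cite{REU} (graphs on subspaces of dimension $0,1,d-1,d$), but size $4$ is not, and your ``short case analysis'' does not supply the needed ingredient. The paper's Proposition \ref{prop:size4} shows that \emph{every} four-element subset of $\Z_2^d$ is a graph on a two-dimensional subspace: after translating $0$ into $E$ and applying a linear map, either $E$ spans a $2$-dimensional subspace or $E=\{0,e_1,e_2,e_3\}$, and in the latter case one projects along $(1,1,1)$ to map $E$ bijectively onto $\Z_2^2\times\{0\}$. This is exactly the step that fails in $\Z_2^5$ for size $8$ (the paper gives an explicit non-tiling $8$-element set), which is why the computer search cannot be avoided for $d=5,6$; your description of that search is consistent with what the programs in \cite{Add} do.
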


We summarize the status of Fuglede's conjecture in $\Z_p^d$ as follows.

\begin{itemize}
\item $d\leq 2$. True for all primes $p$.
\item $d=3$. True for $p \leq 7$ and unresolved for $p \geq 11$.
\item $4\leq d\leq 6$. True for $p=2$ and false for $p \geq 3$.
\item $7 \leq d \leq 9$. Unresolved for $p=2$ and false for $p \geq 3$.
\item $d \geq 10$. False for all primes $p$.
\end{itemize}

For $d=3$, Aten et al. \cite[Thm. 1.1(g)]{REU} proved the ``tiling implies spectral" direction of the conjecture, so only the ``spectral implies tiling" direction remains.
Our work disproves only the ``spectral implies tiling" direction of the conjecture in $\mathbb{Z}_{p}^{d}$ for $d\geq 4$ when $p$ is odd, and in $\mathbb{Z}_{2}^{d}$ for $d\geq 10$.
The ``tiling implies spectral" direction in these cases remains open.

We review the counterexample of Aten et al.~\cite{REU} in Section \ref{sec:counter} and present our counterexample
in  Section \ref{sec:newcounter}, proving Theorem \ref{thm:main}.
In Section \ref{sec:bp1}, we prove Proposition \ref{prop:alog}, which shows that our counterexample cannot be improved and shows the underlying reason why it works.
This proposition also justifies the approach taken by our computer program.
In Section \ref{sec:p2}, we prove Theorem \ref{thm:p2} and state two conjectures relevant to $\mathbb{Z}_{2}^{d}$ when $7\leq d\leq 9$.

\section{The Original Counterexample}
\label{sec:counter}

We first describe the counterexample used by Aten et al.~\cite[Sec. 8]{REU}. Let $p$ be a prime.

\begin{definition}
A vector $v$ with entries in $\Z_p$ is called \emph{equidistributed} if each of the values $0,1,\dots,p-1$
appears an equal number of times as entries of $v$.
Note that the dimension of $v$ must be a multiple of $p$ for this to occur.
\end{definition}

Aten et al.~\cite{REU} refer to an equidistributed vector as a \emph{balanced} vector.

\begin{definition}
\label{def:loghadamard}
A square matrix $\mb{L}$ with entries in $\Z_p$ is called \emph{log-Hadamard} if the difference of any two distinct rows
is an equidistributed vector.
\end{definition}

While we do not consider log-Hadamard matrices over $\mathbb{Z}_{m}$ with $m$ composite here, such log-Hadamard matrices were used by Kolountzakis and Matolcsi \cite{KM1} to disprove Fuglede's conjecture in $\mathbb{Z}_{8}^{3}$, thereby disproving it in $\mathbb{R}^{3}$.

By \cite[Thm. 4.1(g)]{REU}, $\mb{L}$ is log-Hadamard if and only if $\mb{L}^T$ is,
that is, we can replace ``rows'' by ``columns'' in Definition \ref{def:loghadamard}.
Notice that adding the vector $(1,\dots,1)$, consisting only of ones, to any row or column of a matrix
preserves the property of being log-Hadamard. When each entry of a vector is a one, we refer to it as an \emph{all-one vector}.

The following theorem relates log-Hadamard matrices to Fuglede's conjecture. It is implied by \cite[Thm. 4.7]{REU}.

\begin{theorem}
\label{thm:lhtofuglede}
Let $p$ be a prime. If there exists an $m\times m$ log-Hadamard matrix with entries in $\Z_p$ and rank at most $d$, and $m$ is not a power of $p$,
then Fuglede's conjecture fails in $\Z_p^d$.
\end{theorem}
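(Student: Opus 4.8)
The plan is to use the matrix $\mb{L}$ to manufacture a single set $E\subseteq\Z_p^d$ that is spectral but does not tile, which contradicts the conjecture. The engine behind spectrality is the elementary character-sum identity: if $v=(v_1,\dots,v_m)$ is equidistributed with entries in $\Z_p$, then $\sum_{j=1}^m e^{2\pi i v_j/p}=0$, since each residue in $\set{0,1,\dots,p-1}$ occurs equally often and $\sum_{a=0}^{p-1}e^{2\pi i a/p}=0$. The log-Hadamard hypothesis is precisely a supply of such equidistributed vectors, namely the differences of distinct rows (and, via $\mb{L}^T$, of distinct columns).

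Next I would exploit the rank bound to realize $\mb{L}$ by vectors in $\Z_p^d$. Since $\rank\mb{L}=r\le d$ over the field $\Z_p$, factor $\mb{L}=\mb{U}\mb{V}^T$ with $\mb{U},\mb{V}$ of inner dimension $r$, then append zero columns to make both $m\times d$; this padding does not change any dot product. Writing $u_1,\dots,u_m$ and $v_1,\dots,v_m$ in $\Z_p^d$ for the rows of $\mb{U}$ and $\mb{V}$, we have $L_{ij}=u_i\cdot v_j$. I then set $E=\set{v_1,\dots,v_m}$ and propose the spectrum $\Lambda=\set{u_1,\dots,u_m}$.

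For the spectral claim there are two ingredients. Orthogonality is immediate: for distinct $u_i,u_{i'}\in\Lambda$ we have $\sum_{x\in E}e^{2\pi i((u_i-u_{i'})\cdot x)/p}=\sum_{j=1}^m e^{2\pi i(L_{ij}-L_{i'j})/p}=0$ by the identity above, because row $i$ minus row $i'$ of $\mb{L}$ is equidistributed. To upgrade orthogonality to a basis I need $\abs{\Lambda}=\abs{E}=m=\dim L^2(E)$, since pairwise orthogonal nonzero vectors are linearly independent. This forces me to check that the $u_i$ are pairwise distinct and the $v_j$ are pairwise distinct, and this bookkeeping is the most delicate point of the argument, because the non-tiling step below collapses if $\abs{E}$ is smaller than $m$. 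Here I would argue as follows: distinct rows of $\mb{L}$ have equidistributed, hence nonzero, difference, so the rows of $\mb{L}$ are distinct; and by \cite[Thm.~4.1(g)]{REU} the same holds for columns, so the columns of $\mb{L}$ are distinct as well. Since column $j$ of $\mb{L}$ equals $\mb{U}v_j$, equality $v_j=v_{j'}$ would force columns $j$ and $j'$ to coincide; thus the $v_j$ are distinct, and symmetrically the $u_i$ are distinct. Hence $\abs{E}=\abs{\Lambda}=m$ and $E$ is spectral.

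Finally, to see that $E$ does not tile I would use a pure cardinality count. If $E$ tiled $\Z_p^d$ with translation set $T$, the translates $\set{E+t}_{t\in T}$ would partition $\Z_p^d$, giving $\abs{T}\cdot\abs{E}=p^d$; thus $\abs{E}$ would divide $p^d$ and therefore be a power of $p$. But $\abs{E}=m$ is not a power of $p$ by hypothesis, a contradiction. Therefore $E$ is spectral yet does not tile, and Fuglede's conjecture fails in $\Z_p^d$.
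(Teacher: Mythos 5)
Your proposal is correct. It differs from the paper's proof only in how much it leaves to citation: the paper's entire argument is two sentences, invoking Aten et al.\ \cite[Thm.~4.7]{REU} for the existence of a spectral set of size $m$ and then observing that $m\nmid p^d$ forces non-tiling. You instead reprove the cited theorem from scratch, via the rank factorization $\mb{L}=\mb{U}\mb{V}^T$ padded to $m\times d$, with $E$ and $\Lambda$ read off as the rows of the two factors and orthogonality coming from the vanishing character sums over equidistributed difference vectors. This is the same mechanism the paper uses informally in Section~\ref{sec:newcounter} when it extracts the explicit sets $E$ and $B$ from $\mb{L}'=\mb{B}\mb{E}^T$, so your route is not conceptually different, just self-contained. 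What your version buys is independence from the black box, and you correctly handle the one point that a careless unpacking would miss: verifying that the $u_i$ and the $v_j$ are pairwise distinct (via distinctness of rows and columns of $\mb{L}$, which follows because equidistributed vectors are nonzero), so that $\abs{E}=\abs{\Lambda}=m$ and the orthogonal family really is a basis of the $m$-dimensional space $L^2(E)$. The non-tiling step is identical to the paper's.
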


\begin{proof}
By \cite[Thm. 4.7]{REU}, there exists a spectral subset of $\Z_p^d$ of size $m$. Since $m$ is not a power of $p$, this set cannot be a tile.
\end{proof}

We are now ready to present the counterexample of Aten et al.~\cite{REU}.
We define the vectors $v_k = (0^k,1^k,\dots,(p-1)^k)$ for $k\geq 0$, letting $0^{0}$ denote $1$, although we only use
\[
v_0=(1,1,\dots,1),\quad v_1=(0,1,\dots,p-1),\quad v_2=(0^2,1^2,\dots,(p-1)^2).
\]
For vectors $v, w\in\mathbb{Z}_{p}^{p}$, let $(v,w)\in\mathbb{Z}_{p}^{2p}$ denote the vector obtained by concatenating $v$ and $w$.

\begin{theorem}[{{\cite[Sec. 8]{REU}}}]
\label{thm:counterex}
Let $p$ be an odd prime and $n$ a nonsquare modulo $p$. Let $\mb{L}$ be the $2p \times 2p$ matrix with rows
\begin{align*}
L_k &= 2k(v_1,nv_1)-k^2(v_0,nv_0), \\
L_{k+p} &= (v_2,nv_2)-2nk(v_1,v_1)+nk^2(nv_0,v_0),
\end{align*}
where $0 \leq k \leq p-1$ and the row index starts at zero.
Then $\mb{L}$ is log-Hadamard.
\end{theorem}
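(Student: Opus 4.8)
The plan is to compute the entries of each row explicitly and then verify directly that every difference of two distinct rows is equidistributed, splitting into three cases according to which of the two row-groups the two indices come from. Writing $j\in\{0,1,\dots,p-1\}$ for the coordinate within each half, substituting the definitions of $v_0,v_1,v_2$ shows that the $j$-th entries of $L_k$ are $2kj-k^2$ in the first half and $n(2kj-k^2)$ in the second half, while the $j$-th entries of $L_{k+p}$ are $(j-nk)^2$ in the first half and $n(j-k)^2$ in the second half. I would record these four formulas first, since every case reduces to analyzing them.

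For the two ``same-group'' cases, namely $L_k-L_{k'}$ and $L_{k+p}-L_{k'+p}$ with $k\neq k'$, the idea is that each half becomes an affine function of $j$ with nonzero slope. For instance, $L_k-L_{k'}$ has $j$-th first-half entry $(k-k')(2j-(k+k'))$ and $j$-th second-half entry equal to $n$ times the same; since $2$ and $k-k'$ are invertible in $\Z_p$, each half is a bijection of $\Z_p$ as $j$ ranges over $\{0,\dots,p-1\}$. The difference of squares $(j-nk)^2-(j-nk')^2$ factors analogously for $L_{k+p}-L_{k'+p}$. In each subcase both halves are permutations of $\Z_p$, so every value in $\{0,\dots,p-1\}$ occurs exactly twice and the difference is equidistributed.

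The crux is the ``mixed'' case $L_{k+p}-L_{k'}$, and this is where the hypothesis that $n$ is a nonsquare is used. Completing the square, I would show that the first-half $j$-th entry equals $(j-(nk+k'))^2-2nkk'$ and the second-half $j$-th entry equals $n(j-(k+k'))^2-2nkk'$; the key point is that the additive constant $-2nkk'$ is the same in both halves. Dropping this common constant, which preserves equidistribution since adding a fixed element of $\Z_p$ to every entry merely relabels the values, the first half realizes the multiset $\{m^2:m\in\Z_p\}$ and the second realizes $\{nm^2:m\in\Z_p\}$. The first multiset contains $0$ once and each nonzero quadratic residue twice; because $n$ is a nonsquare, multiplication by $n$ sends nonzero residues to nonresidues, so the second multiset contains $0$ once and each nonresidue twice. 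Combining the two halves, $0$ appears twice, each nonzero residue appears twice (from the first half), and each nonresidue appears twice (from the second half), so the difference is equidistributed.

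The main obstacle is this last case: the same-group cases are routine affine and difference-of-squares computations, but the mixed case requires the precise cancellation making the two halves' constant terms coincide, together with the observation that the squares and their nonsquare multiples partition $\Z_p$ with uniform multiplicity. Once these three cases are verified, every difference of distinct rows is equidistributed, which is exactly the log-Hadamard property; this also shows incidentally that all $2p$ rows are distinct, since each such difference is nonzero.
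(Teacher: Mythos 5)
Your proof is correct and complete. Note that the paper itself gives no proof of this statement---it is imported from Aten et al.~\cite[Sec.~8]{REU}---so there is no in-paper argument to compare against; your verification is the natural one and almost certainly mirrors the original. Your entry formulas check out ($2kj-k^2$ and $n(2kj-k^2)$ for $L_k$; $(j-nk)^2$ and $n(j-k)^2$ for $L_{k+p}$), the two same-group cases correctly reduce to nonconstant affine functions of $j$ (using that $2$ and $k-k'$ are invertible for $p$ odd), and in the mixed case the completion of the square does produce the identical constant $-2nkk'$ in both halves, after which the partition of $\Z_p^{\times}$ into residues and nonresidues (each hit with multiplicity two by $\{m^2\}$ and $\{nm^2\}$ respectively, with $0$ hit once by each) gives equidistribution. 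The only point worth stating explicitly is that $n$, being a nonsquare, is in particular nonzero, which you implicitly use when arguing the second halves are permutations.
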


Observe that $\mb{L}$, as defined in Theorem \ref{thm:counterex}, has rank at most five. Thus, by Theorem \ref{thm:lhtofuglede},
Fuglede's conjecture fails in $\Z_p^5$ for all odd primes $p$.
Moreover, if $p \equiv 3 \pmod 4$, then $-1$ is a nonsquare modulo $p$.
Taking $n=-1$ yields a matrix $\mb{L}$ with rank at most four, because this choice of $n$ makes $(v_0,nv_0)$ and $(nv_0,v_0)$ parallel.
Therefore, Fuglede's conjecture fails in $\Z_p^4$ for all primes $p \equiv 3 \pmod 4$.

We might wonder whether there is something special about primes congruent to $3$ modulo $4$ that allows us to reduce the rank of $\mb{L}$ by taking $n=-1$.
Our modified counterexample in the next section shows that this is not the case.
Indeed, we illustrate below that $-1$ being a nonsquare modulo $p$ is not crucial to the argument, and that Fuglede's conjecture still fails in $\Z_p^4$ for all primes $p \equiv 1 \pmod 4$.

\section{The Modified Counterexample}
\label{sec:newcounter}

As noted in Section \ref{sec:counter}, adding a multiple of the all-one vector $(v_0,v_0)$ to any row of
the $2p \times 2p$ matrix $\mb{L}$ preserves the property of being log-Hadamard.
Let $\alpha,\beta \in \Z_p$ be constants to be determined later.
For each $0 \leq k \leq p-1$, we add $\alpha k^2(v_0,v_0)$ to row $L_k$
and add $\beta k^2(v_0,v_0)$ to row $L_{k+p}$. The result is the log-Hadamard matrix $\mb{L}'$ with rows
\begin{align}
L'_k &= 2k(v_1,nv_1)+k^2\paren{(\alpha-1)v_0,(\alpha-n)v_0}, \label{eq:L'1} \\
L'_{k+p} &= (v_2,nv_2)-2nk(v_1,v_1)+k^2\paren{(\beta+n^2)v_0,(\beta+n)v_0}. \label{eq:L'2}
\end{align}
In order for $\mb{L}'$ to have rank at most four, we want the vectors
$$\paren{(\alpha-1)v_0,(\alpha-n)v_0} \quad \text{and} \quad \paren{(\beta+n^2)v_0,(\beta+n)v_0}$$
to be parallel. This is accomplished by setting
$(\alpha-1)(\beta+n)=(\alpha-n)(\beta+n^2)$, which simplifies to
\begin{equation}
\label{eq:alphabeta}
\beta-n\alpha+(n^2+n)=0.
\end{equation}

Notice that the rank of the constructed $\mb{L'}$ is exactly four, as we can easily check that the vectors
$(v_2,nv_2)$, $(v_1,nv_1)$, $(v_1,v_1)$ and $(av_0,bv_0)$ are linearly independent for any $a,b$ that are not both zero.
Thus, we have proved the following proposition.

\begin{prop}
\label{prop:rank4}
Let $p$ be an odd prime, $n$ a nonsquare modulo $p$, and let $\alpha$ and $\beta$ satisfy \eqref{eq:alphabeta}.
Let $\mb{L}'$ be the $2p \times 2p$ matrix with rows
\eqref{eq:L'1} and \eqref{eq:L'2}, where $0\leq k \leq p-1$ and the row index starts at zero.
Then $\mb{L}'$ is log-Hadamard of rank four.
\end{prop}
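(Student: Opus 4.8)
The plan is to verify the two claims in Proposition~\ref{prop:rank4}: that $\mb{L}'$ is log-Hadamard, and that its rank is exactly four. The first claim requires almost no work. By Theorem~\ref{thm:counterex}, the matrix $\mb{L}$ with rows $L_k$ and $L_{k+p}$ is already log-Hadamard, and the excerpt has noted that adding any multiple of the all-one vector $(v_0,v_0)$ to a row preserves this property. Since $L'_k = L_k + \alpha k^2(v_0,v_0)$ and $L'_{k+p} = L_{k+p} + \beta k^2(v_0,v_0)$ differ from the original rows precisely by such multiples, $\mb{L}'$ remains log-Hadamard. I would state this in a single sentence, citing Theorem~\ref{thm:counterex} and the closure remark preceding it.

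The substance is the rank computation. First I would record that every row of $\mb{L}'$ lies in the span of the four fixed vectors
\[
w_1 = (v_2, n v_2), \quad w_2 = (v_1, n v_1), \quad w_3 = (v_1, v_1), \quad w_4 = \paren{(\alpha-1)v_0, (\alpha-n)v_0},
\]
where I use $w_4$ for the common direction of the two quadratic-in-$k$ terms. Indeed, \eqref{eq:L'1} gives $L'_k = 2k\,w_2 + k^2 w_4$, and under the parallelism condition \eqref{eq:alphabeta} the vector $\paren{(\beta+n^2)v_0,(\beta+n)v_0}$ is a scalar multiple of $w_4$, so \eqref{eq:L'2} reads $L'_{k+p} = w_1 - 2n k\, w_3 + c\,k^2 w_4$ for some scalar $c$. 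Hence $\rank \mb{L}' \le 4$. For the lower bound it suffices to exhibit four rows whose span is all of $\{w_1, w_2, w_3, w_4\}$; for instance, the rows for $k=0$ give $L'_0 = 0$ (degenerate) so I would instead take $k=0$ and $k=1$ in each block: $L'_p = w_1$, then $L'_1 = 2w_2 + w_4$ and $L'_{p+1} = w_1 - 2n\,w_3 + c\,w_4$, and one further row such as $L'_2 = 4w_2 + 4w_4$ to separate $w_2$ from $w_4$. The cleanest route is to show the four $w_i$ are themselves linearly independent and that their span is achieved.

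The only real content is therefore the linear independence of $w_1, w_2, w_3, w_4$, which the excerpt asserts as ``easily checked'' for any $a,b$ not both zero in place of $(\alpha-1),(\alpha-n)$. I would verify it directly: suppose $a_1 w_1 + a_2 w_2 + a_3 w_3 + a_4 w_4 = 0$. Looking at the first $p$ coordinates gives $a_1 v_2 + (a_2+a_3) v_1 + a_4(\alpha-1) v_0 = 0$, and since $v_0, v_1, v_2$ are the evaluations of $1, x, x^2$ on the $p$ distinct points of $\Z_p$ they are linearly independent (a Vandermonde argument), forcing $a_1 = 0$, $a_2 + a_3 = 0$, and $a_4(\alpha-1)=0$. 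The second block of $p$ coordinates gives $n a_1 v_2 + (n a_2 + a_3) v_1 + a_4(\alpha-n) v_0 = 0$, so likewise $n a_2 + a_3 = 0$ and $a_4(\alpha-n)=0$. Combining $a_2 + a_3 = 0$ with $n a_2 + a_3 = 0$ yields $(n-1)a_2 = 0$; since $n$ is a nonsquare it is not $1$, hence $n \ne 1$ in $\Z_p$ and $a_2 = a_3 = 0$. Finally $\alpha - 1$ and $\alpha - n$ cannot both vanish (again because $1 \ne n$), so $a_4 = 0$. Thus all $a_i$ vanish and the rank is exactly four. The main obstacle, such as it is, is simply being careful that the parallelism condition \eqref{eq:alphabeta} makes $w_4$ genuinely nonzero—equivalently that $(\alpha-1, \alpha-n) \ne (0,0)$, which holds since $n\neq 1$—so that the four vectors really do span a four-dimensional space rather than collapsing.
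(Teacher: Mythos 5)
Your proof is correct and follows essentially the same route as the paper: the paper likewise obtains the log-Hadamard property from closure under adding all-one multiples to rows, gets $\rank \mb{L}' \le 4$ from the parallelism condition \eqref{eq:alphabeta}, and asserts without detail that $(v_2,nv_2)$, $(v_1,nv_1)$, $(v_1,v_1)$, $(av_0,bv_0)$ are linearly independent for $(a,b)\neq(0,0)$. You simply supply the Vandermonde-style verification (using $n\neq 1$ since $n$ is a nonsquare) and the check that the rows actually attain the span, both of which the paper leaves implicit.
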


For any $n$, there exist $\alpha$ and $\beta$ that satisfy \eqref{eq:alphabeta},
although we can take $\alpha=\beta=0$ only when $n=-1$ and this is available only when $p \equiv 3\pmod 4$.
This shows that there is nothing special about $p \equiv 3 \pmod 4$ with respect to this counterexample
besides the fact that we can choose $\alpha=\beta=0$.

From Proposition \ref{prop:rank4} and Theorem \ref{thm:lhtofuglede}, we conclude that Theorem \ref{thm:main} holds.

Proposition \ref{prop:rank4} gives rise to the following explicit spectral subset $E$ of $\Z_p^4$, $p$ an odd prime, of size $2p$ with spectrum $B$:
\begin{align*}
E &= \set{(k^2, k, k, \alpha-1): k \in \Z_p} \cup \set{(nk^2, nk, k, \alpha-n): k \in \Z_p}, \\
B &= \set{(0, 2k, 0, k^2): k \in \Z_p} \cup \set{(1, 0, -2nk, nk^2): k \in \Z_p},
\end{align*}
where $\alpha \in \Z_p$ and $n$ is a non-square modulo $p$.
The value of $\beta$ above is given by Equation \eqref{eq:alphabeta}.
The above set $E$ is found by calculating a rank factorization of $\mb{L}'$ as $\mb{L}'=\mb{B}\mb{E}^T$, where $\mb{E}$ and $\mb{B}$ are of size $2p \times 4$, as in \cite{Mat}.
Then, we can take the rows of $\mb{E}$ and $\mb{B}$ to be the elements of $E$ and $B$, respectively.
As $|E|=2p$, which does not divide $p^{4}$, the spectral set $E$ does not tile $\mathbb{Z}_{p}^{4}$, giving an explicit counterexample to Fuglede's conjecture.

Note that, for fixed $n$, all values of $\alpha$ yield the same $E$ up to translation.
By translational invariance \cite[Cor. 4.3(c)]{REU}, we have essentially a single counterexample for each $n$.
Notice also that, by duality, $B$ is a spectral subset with spectrum $E$.

\section{Dephased Log-Hadamard Matrices}
\label{sec:bp1}

It is natural for us to ask whether any modification of the original counterexample $\mb{L}$, similar
to the above, can produce a log-Hadamard matrix of rank less than four. In this section we
show that this is impossible.
Our proof also explains why our counterexample in Section \ref{sec:newcounter} works.
The reason is that we are \emph{dephasing} the original counterexample $\mb{L}$.

\begin{definition}
A log-Hadamard matrix is \emph{dephased} if the entries in the first row and the first column are all zero.
\end{definition}

Observe that when $\alpha=1$ and $\beta=-n^2$, our matrix $\mb{L}'$ from Section \ref{sec:newcounter} is dephased.

\begin{definition}
We say that two log-Hadamard matrices are \emph{equivalent} if they can be transformed into one another by adding all-one vectors to rows and columns and permuting rows and columns.
\end{definition}

It is easy to see that every log-Hadamard matrix is equivalent to a dephased one. Moreover, Aten et al. \cite[Cor. 5.2]{REU} showed that, in a given equivalence class of log-Hadamard matrices, \emph{some} dephased matrix is of lowest rank.

Our Proposition \ref{prop:alog} below shows that more is true: \emph{any} dephased log-Hadamard matrix has the lowest rank in its equivalence class.
In particular, any two equivalent dephased log-Hadamard matrices have the same rank.

From Proposition \ref{prop:alog},
we see that our construction is natural. Indeed, the matrix $\mb{L}'$ of Proposition \ref{prop:rank4} is equivalent to $\mb{L}$ of Theorem \ref{thm:counterex}. Also, we recall the above observation that $\mb{L}'$ is dephased when $\alpha=1$ and $\beta=-n^2$.
Hence, this $\mb{L}'$ has the lowest rank in its equivalence class; in particular, this means that our counterexample cannot be improved. In brief, we have sharpened the counterexample of Aten et al. simply by dephasing it.

Our dephasing result has many applications.
In Section \ref{sec:p2}, guided by Proposition \ref{prop:alog}, we dephase Tao's \cite{Tao} counterexample in $\Z_2^{11}$ to obtain a counterexample in $\Z_2^{10}$, which then cannot be improved.
Finally, this proposition allows our computer program, introduced in Section \ref{sec:p2}, to compute the lowest rank of log-Hadamard matrices of a given size by checking one dephased matrix from each equivalence class.

\begin{prop}
\label{prop:alog}
Any dephased log-Hadamard  matrix over $\mathbb{Z}_{p}$ has the lowest rank among all log-Hadamard matrices equivalent to it.
\end{prop}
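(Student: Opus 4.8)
The plan is to reduce the statement to a short linear-algebraic inequality about column spaces and then exploit the two defining features of a dephased matrix---that its first row \emph{and} its first column vanish---through a two-case dichotomy.

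First I would dispose of the permutations. Row and column permutations change neither the rank of a matrix nor its membership in the equivalence class, and they commute with the all-one additions up to relabeling the scalars involved; hence it suffices to compare the dephased matrix $\mb{L}$ (say of size $N\times N$) with a matrix obtained from it by adding all-one vectors to rows and columns only. Collecting these additions, I can write $\mb{M}=\mb{L}+\mathbf{a}\mathbf{1}^{T}+\mathbf{1}\mathbf{b}^{T}$ for some $\mathbf{a}=(a_{1},\dots,a_{N})^{T}$ and $\mathbf{b}=(b_{1},\dots,b_{N})^{T}$ in $\Z_{p}^{N}$, where $\mathbf{1}$ is the all-one column vector. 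The proposition then reduces to the single inequality $\rank\mb{L}\le\rank\mb{M}$.

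Next I would express the columns of $\mb{L}$ in terms of those of $\mb{M}$. Writing $\ell_{j}$ and $m_{j}$ for the $j$th columns, the relation above reads $\ell_{j}=m_{j}-\mathbf{a}-b_{j}\mathbf{1}$. Because $\mb{L}$ is dephased its first column vanishes, so the case $j=1$ gives $\mathbf{a}=m_{1}-b_{1}\mathbf{1}$; substituting this back yields the clean identity
\[
\ell_{j}=m_{j}-m_{1}+(b_{1}-b_{j})\mathbf{1}.
\]
Thus every column of $\mb{L}$ lies in $V:=\operatorname{colspace}(\mb{M})+\langle\mathbf{1}\rangle$, so $\operatorname{colspace}(\mb{L})\subseteq V$. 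The crux is then a dichotomy on whether $\mathbf{1}\in\operatorname{colspace}(\mb{M})$. If it is, then $V=\operatorname{colspace}(\mb{M})$ and the displayed identity places every $\ell_{j}$ in $\operatorname{colspace}(\mb{M})$, giving $\rank\mb{L}\le\rank\mb{M}$ immediately. If it is not, then $\dim V=\rank\mb{M}+1$, and here I would invoke the other half of the dephasing hypothesis: since the first row of $\mb{L}$ is zero, the top entry of every column $\ell_{j}$ is zero, so $\operatorname{colspace}(\mb{L})$ lies in the hyperplane $\{v:v_{1}=0\}$ and in particular does not contain $\mathbf{1}$. Hence $\operatorname{colspace}(\mb{L})$ is a \emph{proper} subspace of $V$ (which does contain $\mathbf{1}$), forcing $\rank\mb{L}=\dim\operatorname{colspace}(\mb{L})\le\dim V-1=\rank\mb{M}$. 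Either way $\rank\mb{L}\le\rank\mb{M}$, which is the proposition; applying it to two equivalent dephased matrices in turn gives the stated equality of their ranks.

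The main thing to get right is not any hard estimate but the bookkeeping of the reduction and the recognition that the two vanishing conditions play complementary roles: the zero first column is what lets me eliminate $\mathbf{a}$ and land inside $V$, while the zero first row is exactly what excludes $\mathbf{1}$ from $\operatorname{colspace}(\mb{L})$ and produces the crucial dimension drop in the second case. I would also stress that the log-Hadamard hypothesis is used only to guarantee that $\mb{M}$ is again log-Hadamard, and hence a legitimate member of the equivalence class; the rank comparison itself is pure linear algebra and requires nothing beyond the dephasing of $\mb{L}$.
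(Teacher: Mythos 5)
Your proof is correct, and it takes a genuinely different route from the paper's. The paper splits the equivalence move into two stages---first adding all-one vectors to the rows of $L$ to get $L'$, then to the columns to get $L''$---and cases on whether the scalar $k$ added to the first row is zero: when $k\neq 0$ it shows $\rank(L')=\rank(L)+1$ exactly (the new first row $k(1,\dots,1)$ escapes the row space of $L$, whose vectors all start with $0$) and then uses that the subsequent column additions, being a rank-one perturbation, can lower the rank by at most one; when $k=0$ it argues directly that the rows $v_i+k_i(1,\dots,1)$ remain independent because each $v_i$ starts with $0$, and then repeats the same argument on columns. You instead collapse all the additions into the single perturbation $M=L+\mathbf{a}\mathbf{1}^{T}+\mathbf{1}\mathbf{b}^{T}$, use the vanishing first \emph{column} of $L$ to solve for $\mathbf{a}$ and place every column of $L$ inside $\operatorname{colspace}(M)+\langle\mathbf{1}\rangle$, and use the vanishing first \emph{row} to exclude $\mathbf{1}$ from $\operatorname{colspace}(L)$, which absorbs the one possible extra dimension. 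Your dichotomy is on whether $\mathbf{1}\in\operatorname{colspace}(M)$ rather than on the intermediate matrices, and in fact it can be eliminated altogether: since $\mathbf{1}\notin\operatorname{colspace}(L)$, one has $\rank(L)+1=\dim\bigl(\operatorname{colspace}(L)+\langle\mathbf{1}\rangle\bigr)\leq\dim\bigl(\operatorname{colspace}(M)+\langle\mathbf{1}\rangle\bigr)\leq\rank(M)+1$. What your approach buys is uniformity and a one-line conceptual core; what the paper's buys is an explicit, elementary row-reduction picture. Both arguments use the two dephasing conditions in the same complementary roles, and your closing remark that the log-Hadamard property enters only to keep $M$ in the equivalence class is accurate.
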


\begin{proof}
Let $L=[L_{i,j}]$ be an $n\times n$ dephased log-Hadamard matrix over $\mathbb{Z}_{p}$. Since $L$ is dephased, we have $L_{i,j}=0$ whenever $i=1$ or $j=1$. We wish to determine if any other equivalent log-Hadamard matrix has lower rank than $L$. However, any such equivalent matrix may be obtained, up to permutation of rows and columns, from $L$ by adding multiples of the $1\times n$ vector $(1,1,\dots, 1)$ to the rows and $(1,1,\dots, 1)^{T}$ to the columns of $L$. Letting $L^{\prime}$ be obtained from $L$ by adding multiples of $(1,1,\dots, 1)$ to the rows of $L$, and letting $L^{\prime\prime}$ be obtained from $L^{\prime}$ by adding multiples of $(1,1,\dots, 1)^{T}$ to the columns of $L^{\prime}$, we see that our conclusion will be obtained if we can prove that $\text{rank}(L^{\prime\prime})\geq \text{rank}(L)$ in all cases.

\emph{Case 1}: $L^{\prime}=[L_{i,j}^{\prime}]$ satisfies $L'_{1,j}=k$ for $1\leq j\leq n$, with $k$ nonzero modulo $p$. In this case, we can add multiples of the first row of $L'$ to the other rows of $L'$ to obtain a matrix $M$ with the same rank as $L'$ but which differs from $L$ only in its first row.
Since the first row and column of $L$ are zero, $\rank(L') = \rank(M) = \rank(L)+1$.
Then, as the span of $(1,1,\dots, 1)^{T}$ is $1$-dimensional, we have
\[
|\text{rank}(L^{\prime\prime})-\text{rank}(L^{\prime})|\leq 1.
\]
So $\text{rank}(L^{\prime\prime})\geq \text{rank}(L)$, as was to be shown.

\emph{Case 2}: $k$ is zero modulo $p$. To analyze this case, first write $L=
\begin{bmatrix}
0\\
v_{1}\\
\vdots\\
v_{n-1}
\end{bmatrix},
$
with $v_{1},\dots, v_{n-1}$ being row vectors. Write $\ell=\text{rank}(L)$. Then, without loss of generality, we may suppose that $v_{1}, \dots, v_{\ell}$ are linearly independent. Now, after adding a multiple of $(1, 1, \dots, 1)$ to each row of $L$, we arrive at
\[
L^{\prime}=\begin{bmatrix}
0\\
v_{1}+k_{1}(1,\dots, 1)\\
\vdots\\
v_{n-1}+k_{n-1}(1,\dots, 1)
\end{bmatrix}
\]
over $\mathbb{Z}_{p}$, for some $k_{1}, \dots, k_{n-1}$.
We claim that $\{v_{i}+k_{i}(1,1,\dots, 1)\}_{i=1}^{\ell}$ is a linearly independent set.
Otherwise, there exist $a_{i}$, not all $0$, with $\sum_{i}a_{i}v_{i}=-\left(\sum_{i}a_{i}k_{i}\right)(1,1,\dots, 1)$.
Since the $v_i$ start with 0, comparing the first entries of both sides gives $\sum_{i} a_{i}k_{i}=0$. Thus, $\sum_{i}a_{i}v_{i}=0$, so $\{v_{i}\}_{i=1}^{\ell}$ is a linearly dependent set, a contradiction. Thus, $\text{rank}(L^{\prime})\geq \ell = \text{rank}(L)$.

Notice that in the above argument, we have used the fact that all rows of $L$ start with 0.
By applying an analogous argument to the columns, and using the fact that all columns of $L'$ start with 0 because $k=0$, we deduce that $\text{rank}(L^{\prime\prime})\geq \text{rank}(L^{\prime})$. So $\text{rank}(L^{\prime\prime})\geq \text{rank}(L)$, as was to be shown.
\end{proof}

The further question of whether some log-Hadamard matrix of size $2p \times 2p$ and rank three can be constructed remains unresolved. However, any such construction has to fail for $p \leq 7$, because Fuglede's conjecture is true in $\Z_p^3$ in that case, by the work of Fallon, Mayeli, and Villano \cite{FMV}.

Finally, our technique does not work when $p=2$. Indeed, there is no nonsquare modulo 2.
Furthermore, it is not sufficient to demonstrate the existence of a $2p \times 2p$ log-Hadamard matrix of appropriate rank, because $2p$
is now a power of $p=2$. This implies that we must look for a matrix of size at least $6\times 6$.
However, by \cite[Cor. 5.5]{REU}, the dimensions of such a matrix must be divisible by 4, so we should look for a matrix of size at least $12 \times 12$.
Tao \cite{Tao} presented one example of a matrix of this size with rank eleven. This will be discussed in the next section.

\section{The Case $p=2$}
\label{sec:p2}

In this section, we show that Fuglede's conjecture holds in $\Z_2^4$ and fails in $\Z_2^{10}$.
For the first result, we need the following proposition. First, we define a subset $E$ of $\mathbb{Z}_{2}^{d}$ to be a \emph{graph on} $V$, for $V$ a subspace of $\mathbb{Z}_{2}^{d}$, if there is a projection operator on $\mathbb{Z}_{2}^{d}$ with image $V$ that projects $E$ bijectively onto $V$.
Aten et al. \cite{REU} call a graph on a subspace a \emph{full graph set}.

\begin{prop}
\label{prop:size4}
Any subset $E$ of $\Z_2^d$ of size $4$ is a graph on some 2-dimensional subspace $V$ of $\Z_2^d$.
\end{prop}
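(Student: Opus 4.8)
The plan is to recast the definition of a graph on a subspace into a condition on the difference set of $E$, and then to build a suitable projection explicitly. First I would observe that a projection $P$ onto a $2$-dimensional subspace $V$, say with kernel $U$, restricts to an injection on $E$ exactly when no two distinct points of $E$ differ by a nonzero element of $U$; that is, when $(E-E)\cap U=\set{0}$, where $E-E=\set{x-y:x,y\in E}$. Since $\abs{E}=\abs{V}=4$, this injectivity upgrades automatically to a bijection onto $V$. Moreover such a $U$ has codimension $2$, so any $2$-dimensional complement of $U$ can play the role of $V$, with $P$ the projection onto it along $U$. Thus the proposition reduces to producing a codimension-$2$ subspace $U$ of $\Z_2^d$ meeting $E-E$ only at the origin. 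Because $E-E$ is unchanged under translation of $E$, I may compute it assuming $0\in E$ and write $E=\set{0,a,b,c}$ with $a,b,c$ distinct and nonzero; then the nonzero differences are exactly $S=\set{a,b,c,a+b,a+c,b+c}$.

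Next I would study $W=\mathrm{span}(a,b,c)$, noting $S\subseteq W$ and $2\le\dim W\le 3$ (a $1$-dimensional subspace over $\Z_2$ has only one nonzero vector, too few to accommodate three distinct nonzero vectors $a,b,c$). The key observation is that the triple sum $s=a+b+c$ never lies in $S$: each possible equation $s=a$, $s=a+b$, and so on would force two of $a,b,c$ to coincide or one of them to vanish. Hence $S=W\setminus\set{0,s}$, and it suffices to build $U$ so that $U\cap W=\mathrm{span}(s)$, since $\mathrm{span}(s)$ avoids $S$.

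Finally I would construct $U$ according to $\dim W$. If $\dim W=2$, then $W$ has exactly three nonzero vectors, so $\set{a,b,c}=W\setminus\set{0}$, forcing $E=W$ and $s=0$; here any codimension-$2$ complement $U$ of $W$ works, and indeed $E$ is a graph on itself. If $\dim W=3$, then $s\neq 0$; choosing a complement $W'$ with $\Z_2^d=W\oplus W'$ and setting $U=\mathrm{span}(s)\oplus W'$ gives $\dim U=1+(d-3)=d-2$ and $U\cap W=\mathrm{span}(s)$, as required. In either case $(E-E)\cap U=\set{0}$, which completes the reduction.

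I expect the main obstacle to be the case $\dim W=3$, where the six nonzero differences fill up all but one direction of the $3$-dimensional space $W$; the crux is recognizing that the leftover direction is precisely $\mathrm{span}(a+b+c)$ and that a projection is free to keep this one direction inside its kernel. The surrounding steps—the translation-invariance reduction and the passage from injectivity to bijectivity via the count $\abs{E}=4$—should be routine once the difference-set reformulation is in place.
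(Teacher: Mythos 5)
Your proof is correct and follows essentially the same route as the paper: after translating so that $0\in E$, you split on whether the three nonzero elements span a $2$- or $3$-dimensional space, and in the latter case you project along the triple sum $a+b+c$ --- which is exactly the kernel direction $(1,1,1)$ the paper uses after normalizing $a,b,c$ to $e_1,e_2,e_3$ by an invertible linear map. Your difference-set reformulation is a coordinate-free packaging of the same idea and is carried out without gaps.
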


\begin{proof}
The property of being a graph on a subspace is invariant under translation and under invertible linear transformations.
So assume by translation that $0 \in E$. If elements of $E$ lie in a 2-dimensional subspace $V$,
then $E$ is clearly a graph on $V$.
Otherwise, by applying an invertible linear transformation, we may suppose that
\[
E = \set{0,e_1,e_2,e_3},
\]
where $e_i$ is the vector with one in the $i$th slot and zeros elsewhere.
Project $E$ bijectively onto the set $E' \subseteq \Z_2^3$ obtained by forgetting all but the first three coordinate entries of the elements of $E$.
Then, apply a further projection operator whose kernel is the span of $(1,1,1)$ and whose image is $\Z_2^2 \times \set{0}$. This operator bijectively maps $E'$ onto its image.
\end{proof}

\begin{prop}
\label{prop:z24}
Fuglede's conjecture holds in $\Z_2^4$.
\end{prop}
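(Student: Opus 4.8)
The plan is to prove both directions of Fuglede's conjecture at once by sorting the nonempty subsets $E\subseteq\Z_2^4$ by cardinality and checking that, for every admissible size, tiling and spectrality coincide. Since $\abs{\Z_2^4}=16$, a tile must have $\abs{E}\in\set{1,2,4,8,16}$. The first task is to confine spectral sets to the same list. For this I would record the correspondence behind Theorem \ref{thm:lhtofuglede} and \cite[Thm. 4.7]{REU}: if $E$ is spectral of size $m$ with spectrum $B$, then the $m\times m$ sign matrix with entries $(-1)^{b\cdot e}$, $b\in B$, $e\in E$, has pairwise orthogonal rows, since distinct characters are orthogonal over $E$. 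It is therefore a real Hadamard matrix, so $m$ is $1$, $2$, or a multiple of $4$ (equivalently \cite[Cor. 5.5]{REU}). This eliminates every size except $12$ from outside $\set{1,2,4,8,16}$.

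The engine for the remaining cases is a duality lemma I would isolate first: if $E\subsetneq\Z_2^4$ is nonempty and spectral, then $E-E\neq\Z_2^4$. Indeed, writing $B$ for a spectrum of $E$, the spectral--spectrum duality \cite{REU} makes $B$ spectral with spectrum $E$, so $\sum_{b\in B}(-1)^{(e-e')\cdot b}=0$ for all distinct $e,e'\in E$. Were $E-E=\Z_2^4$, every nonzero Fourier coefficient of $\mathbf{1}_B$ would vanish, forcing $\mathbf{1}_B$ constant and hence $\abs{B}\in\set{0,16}$, a contradiction. Applying this lemma to the spectrum $B$ itself (spectral of the same size $m$) disposes of $m=12$: any $12$-element subset of $\Z_2^4$ has full difference set, because translation by a fixed nonzero $c$ splits the $16$ elements into $8$ pairs, and a $12$-element set must contain both members of some pair, so $c\in B-B$. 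Thus spectral sets also satisfy $\abs{E}\in\set{1,2,4,8,16}$.

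It then remains to verify ``tiles $\iff$ spectral'' size by size. Sizes $1$ and $16$ are trivial; size $2$ is a coset of a one-dimensional subspace; and by Proposition \ref{prop:size4} every size-$4$ set is a graph on a two-dimensional subspace. A routine coordinate computation shows that a graph on a subspace $V$ both tiles (translation set a complementary subspace $W$) and is spectral (spectrum the annihilator of $W$), so in all these cases both properties hold and the conjecture is satisfied. The crux is $\abs{E}=8$. Here the duality lemma provides a nonzero $t\notin E-E$, so $E$ and $E+t$ are disjoint; since each has size $8=\abs{\Z_2^4}/2$, they partition $\Z_2^4$, exhibiting $E$ as a transversal of the cosets of $\set{0,t}$, i.e.\ a graph on a three-dimensional subspace, which tiles. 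Conversely a size-$8$ tile uses a translation set $\set{0,t}$, hence is such a transversal and thus a graph, which is spectral.

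I expect the size-$8$ case to be the main obstacle: it is the only size at which genuinely non-subgroup spectral sets occur and at which a tiling must be produced rather than read off from an obvious subgroup structure. The decisive idea is the duality lemma, which forces the difference set $E-E$ to be proper; at exactly half the ambient size this is precisely equivalent to tiling. The ``graph on a subspace $\Rightarrow$ tiling and spectral'' implications are routine linear algebra that I would carry out in a basis adapted to $V$ and its complement, and the sieving of the non-admissible sizes is handled uniformly by the Hadamard constraint together with one application of the lemma.
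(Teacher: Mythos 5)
Your proof is correct, and it follows the same skeleton as the paper's: classify the admissible cardinalities, reduce each case to the statement that $E$ is a graph on a subspace, and invoke Proposition \ref{prop:size4} for the size-$4$ case. The difference is one of self-containment. The paper's proof is essentially a citation: parts (a)--(c) of \cite[Thm.~1.1]{REU} give that spectral sets and tiles in $\Z_2^4$ have size a power of two, and parts (d)--(e) give that sets of size $1,2,8,16$ are graphs on subspaces; only size $4$ needs the new Proposition \ref{prop:size4}. You instead re-derive the imported ingredients: the Hadamard order constraint ($m\in\{1,2\}$ or $4\mid m$) to sieve the sizes, a duality lemma ($E$ spectral, nonempty and proper implies $E-E\neq\Z_2^4$) that simultaneously kills $m=12$ via the pairing argument and settles the crux case $m=8$ by producing a disjoint translate, and the routine verification that a graph on a subspace both tiles and is spectral. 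All of these steps are sound: the duality lemma correctly exploits that the $\pm1$ character matrix is Hadamard, so its columns are orthogonal and the nonzero Fourier coefficients of $\mathbf{1}_B$ vanish on $(E-E)\setminus\{0\}$; and at size $8=\lvert\Z_2^4\rvert/2$ a proper difference set is exactly equivalent to being a transversal of the cosets of a two-element subgroup, hence a graph on a hyperplane. What your route buys is transparency --- the reader sees precisely why the size-$8$ case works --- at the cost of reproving results already available in \cite{REU}; the paper's version is shorter but opaque without that reference in hand.
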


\begin{proof}
We apply \cite[Thm. 1.1]{REU}. Let $E \subseteq \Z_2^4$.
If $E$ is spectral or a tile, then parts (a), (b) and (c) of that theorem imply that $\abs{E}$ is a power of two.
If $\abs{E}=1,2,8,16$, then parts (d) and (e) of the same theorem show that $E$ is a graph on a subspace, which tiles and is spectral.
In the remaining case, $\abs{E}=4$, Proposition \ref{prop:size4} implies $E$ is a graph on a subspace,
and so $E$ tiles and is spectral.
\end{proof}

In contrast, it is not true that every subset $E$ of $\Z_{2}^{5}$ of size $8$ is a graph on some $3$-dimensional subspace $V$ of $\Z_{2}^{5}$. The following example demonstrates this.

\begin{example}
Consider the subset $E$ of $\Z_{2}^{5}$ given by
\[
E=\{0,e_{1}, e_{2}, e_{3}, e_{4}, e_{5}, e_{1}+e_{2}, e_{1}+e_{2}+e_{3}+e_{4}+e_{5}\}.
\]
We claim that $E$ does not tile $\Z_{2}^{5}$ by translation. Granted this, it follows by \cite[Thm. 1.1(d)]{REU} that $E$ is not a graph on a $3$-dimensional subspace $V$ of $\Z_{2}^{5}$.

As $|E|=8$, and $|\Z_{2}^{5}|=32$, we see that $E$ tiles $\Z_{2}^{5}$ if and only if there exist $t_{1}, t_{2}, t_{3}\in \Z_{2}^{5}$ such that $\Z_{2}^{5}$ is the disjoint union of $E, E+t_{1}, E+t_{2}, E+t_{3}$.
As $t_{i}\in\Z_{2}^{5}$, each $t_{i}$ must be a sum of 0, 1, 2, 3, 4, or 5 elements from the set of basis vectors $\{e_{1}, e_{2}, e_{3}, e_{4}, e_{5}\}$ of $\Z_{2}^{5}$. However, if $t_{i}$ is a sum of 0, 1, 2, 4, or 5 basis vectors, then clearly $E\cap (E+t_{i})\neq\varnothing$, so $E+t_{i}$ is not disjoint from $E$. Hence, each $t_{i}$ must be a sum of 3 basis vectors.
Moreover, as $(E+t_i) \cap (E+t_j) = \varnothing$ for $i \neq j$, we have $E \cap (E+t_j-t_i) = \varnothing$.
So, the difference of two distinct $t_i$'s must be a sum of 3 basis vectors.
But we can check that the difference of any two sums of 3 basis vectors is a sum of either 0, 2, or 4 basis vectors, a contradiction.

Our computer program can also easily check that $E$ does not tile.
\end{example}

Thus, by the above example, the method used to prove Fuglede's conjecture for $\Z_{2}^{4}$ fails to resolve it for $\Z_{2}^{5}$; nevertheless, the authors have written a computer program to verify that the conjecture indeed holds for $\Z_{2}^{5}$ and $\Z_{2}^{6}$.
The code that the authors wrote for this program can be found at

\begin{center}
\url{https://github.com/natso26/FugledeZ_2-d}.
\end{center}

\noindent
It uses a similar graph-theoretic approach to that of Siripuram et al.~\cite{Sir}, as well as the information in Sloane's online library of Hadamard matrices \cite{Slo}.

While we do not intend to explain the code here, it is perhaps worth pointing out the role of dephased log-Hadamard matrices in the program's verification that there are no counterexamples to Fuglede's conjecture in $\Z_2^5$ and $\Z_2^6$.
In brief, there are two separate computer programs, \verb|CheckSpecTile| and \verb|DephasedRank|.
The former uses a brute-force approach to check that for every subset $E$ of size 8 in $\Z_2^5$ and of size 16 in $\Z_2^6$, $E$ tiles if and only if $E$ is spectral.
The latter checks that there is no spectral set of size different from a power of two in $\Z_2^5$ and $\Z_2^6$ by computing ranks of dephased log-Hadamard matrices. Computing such ranks is, by Proposition \ref{prop:alog}, enough to conclude that there are no spectral sets of size different from a power of two, by the results of Aten et al. \cite[Thm. 4.7]{REU}. Further explanation is given in the readme files found by following the link.

Finally, we show that Fuglede's conjecture fails in $\Z_2^{10}$.
Tao's \cite{Tao} counterexample of a Hadamard matrix of size $12 \times 12$ with entries $\pm 1$
produces a corresponding log-Hadamard matrix of rank eleven.
Proposition \ref{prop:alog} suggests that we should dephase this matrix, that is, add all-one vectors to its rows and columns so that the first row and column become zero, to obtain a log-Hadamard matrix of possibly lower rank.
This procedure yields
\begin{equation}
\label{eq:dozen}
\setcounter{MaxMatrixCols}{20}
L = \begin{pmatrix}
0 & 0 & 0 & 0 & 0 & 0 & 0 & 0 & 0 & 0 & 0 & 0 \\
0 & 1 & 0 & 1 & 0 & 0 & 0 & 1 & 1 & 1 & 0 & 1 \\
0 & 1 & 1 & 0 & 1 & 0 & 0 & 0 & 1 & 1 & 1 & 0 \\
0 & 0 & 1 & 1 & 0 & 1 & 0 & 0 & 0 & 1 & 1 & 1 \\
0 & 1 & 0 & 1 & 1 & 0 & 1 & 0 & 0 & 0 & 1 & 1 \\
0 & 1 & 1 & 0 & 1 & 1 & 0 & 1 & 0 & 0 & 0 & 1 \\
0 & 1 & 1 & 1 & 0 & 1 & 1 & 0 & 1 & 0 & 0 & 0 \\
0 & 0 & 1 & 1 & 1 & 0 & 1 & 1 & 0 & 1 & 0 & 0 \\
0 & 0 & 0 & 1 & 1 & 1 & 0 & 1 & 1 & 0 & 1 & 0 \\
0 & 0 & 0 & 0 & 1 & 1 & 1 & 0 & 1 & 1 & 0 & 1 \\
0 & 1 & 0 & 0 & 0 & 1 & 1 & 1 & 0 & 1 & 1 & 0 \\
0 & 0 & 1 & 0 & 0 & 0 & 1 & 1 & 1 & 0 & 1 & 1 \\
\end{pmatrix}
\end{equation}
of rank ten. One way to verify this is by the following three lines of MATLAB code.

\begin{figure}[h]
\centering
\begin{varwidth}{\linewidth}
\begin{verbatim}
H = hadamard(12)
L = (H == -1)
gfrank(L, 2)
\end{verbatim}
\end{varwidth}
\end{figure}

\noindent
Therefore, by Theorem \ref{thm:lhtofuglede}, Fuglede's conjecture fails in $\Z_2^{10}$.
From this, Proposition \ref{prop:z24}, and the results of our computer program, we have proved Theorem \ref{thm:p2}.

According to Sloane's online library of Hadamard matrices \cite{Slo}, there is only one equivalence class of $12\times 12$ log-Hadamard matrices over $\mathbb{Z}_{2}$. Hence, by Proposition \ref{prop:alog}, no $12\times 12$ log-Hadamard matrix over $\mathbb{Z}_{2}$ has rank less than $\rank(L)=10$, with $L$ as in \eqref{eq:dozen}.
Thus, as a corollary, there is no spectral set of size $12$ in $\mathbb{Z}_{2}^{d}$ for $d\leq 9$.

Replacing $12$ by any other number $m$ which is not a power of $2$, we can run the above procedure to conclude that there is no spectral set of size $m$ in $\Z_2^d$ by checking that the rank of some dephased $m \times m$ log-Hadamard matrix over $\Z_2$ in each equivalence class is more than $d$.
This is essentially what our program \verb|DephasedRank| does.

In the authors' investigations of Fuglede's conjecture in $\Z_{2}^{d}$, they have repeatedly come across numerical evidence in favor of the following unresolved conjectures.

\begin{enumerate}
    \item For $m$ different from a power of 2, the rank of a dephased $m\times m$, $\Z_2$-valued log-Hadamard matrix $L$ depends only on $m$ and is independent of its equivalence class.
    \item For $m \geq 12$, the rank mentioned above is at least 10. 
\end{enumerate}

Notice that the first assertion generalizes Proposition \ref{prop:alog}.
If the first assertion is true, then it is possible to apply our program \verb|DephasedRank| to Sloane's online library of Hadamard matrices \cite{Slo} to verify the second assertion for $m<256$. If the second assertion is true for $m<256$, this will imply, by Aten et al.~\cite[Thm.~1.1(b),~(e)]{REU}, that any subset $E$ of $\Z_{2}^{d}$ with $d\leq 9$ is not spectral if $|E|$ is not a power of $2$. Then, the size $|E|$ of any counterexample $E$ to Fuglede's conjecture in $\Z_{2}^{d}$ for $d=7,8,9$ would be a power of $2$.
Proving Fuglede's conjecture in these settings, in turn, would resolve Fuglede's conjecture for all finite-dimensional vector spaces over prime fields of dimensions $d\neq 3$.

\section*{Acknowledgements}
The first author would like to thank Palle Jorgensen for introducing him to Fuglede's conjecture. The authors would also like to thank Thomas Fallon, Azita Mayeli, and Dominick Villano for interesting discussions pertaining to the groups $\mathbb{Z}_{p}^{d}$, and for sharing their list of references. Finally, the authors wish to thank the referees for their corrections and suggestions, which have improved the paper's readability and discussion of the literature.

\end{document}